\def\Z {\mathbb{Z}}
\def \co{\colon\!}
\def \ftnote{\let\thefootnote\relax\footnotetext}
\DeclareMathOperator{\terms}{terms}
\newcommand{\abs}[1]{\left|#1\right|}
\def\E{\varepsilon}
\DeclareMathOperator{\sign}{sign}
\title{Minimizing intersection points of curves under virtual homotopy}
\author{Vladimir Chernov}
\address[V.~Chernov]{Department of Mathematics \\
  Dartmouth College}
\email{Vladimir.Chernov@dartmouth.edu}
\thanks{This work was partially supported by grants from the Simons Foundation
\#235674 and \#513272 to Vladimir Chernov.}
\author{David Freund}
\address[D.~Freund]{Mathematics Department \\
  Harvard University}
\email{dfreund@math.harvard.edu}
\author{Rustam Sadykov}
\address[R.~Sadykov]{Department of Mathematics \\
  Kansas State University}
\email{sadykov@ksu.edu}
\newtheorem{theorem}{Theorem}[section]
\newtheorem{lemma}[theorem]{Lemma}
\theoremstyle{remark}
\newtheorem{remark}[theorem]{Remark}
\begin{document}

\begin{abstract} 
A {\it flat virtual link} is a finite collection of oriented closed curves $\mathfrak L$ on an oriented surface $M$ considered up to {\it virtual homotopy}, i.e., a composition of elementary stabilizations, destabilizations, and homotopies. Specializing to a pair of curves $(L_1,L_2)$, we show that the minimal number of intersection points of curves in the virtual homotopy class of $(L_1, L_2)$ equals to the number of terms of a generalization of the Anderson--Mattes--Reshetikhin Poisson bracket. Furthermore, considering a single curve, we show that the minimal number of self-intersections of a curve in its virtual homotopy class can be counted by a generalization of the Cahn cobracket.
\end{abstract}
\maketitle
\leftline {\em \Small 2010 Mathematics Subject Classification. Primary: 57M99}

\leftline{\em \Small Keywords: virtual homotopy, geodesics, intersection points of curves }

\section{Virtual Homotopy of Curves}

Let $M$ be a closed oriented surface, possibly non-connected, and $\mathfrak L$ a finite collection of closed oriented 
curves on $M$. An \emph{elementary  stabilization} of  $\mathfrak L$  is a surgery on $M$ induced by cutting out 
two discs in $M$ away from $\mathfrak L$, and attaching a handle to $M$ along the resulting boundary components.  The inverse operation is called an \emph{elementary destabilization}. More precisely, let $A$ be a simple connected closed curve on $M$ in the complement to $\mathfrak L$. An \emph{elementary destabilization} of $\mathfrak L$ along $A$ consists of cutting $M$ open along $A$ and then capping the resulting boundary circles with disks. A \emph{virtual homotopy}~\cite{CahnLevi} is a composition  of elementary stabilizations, destabilizations, and homotopies. The virtual homotopy class of a collection $\mathfrak L$  is called a \emph{flat virtual link} and is denoted $\{\mathfrak L\}_V$. 

A {\em wedge} on a surface $M$ is a continuous map $S^1\vee S^1 \to M$. Analogously to the above, one defines a {\it virtual homotopy class of a collection of (ordered) wedges of circles\/} $\sqcup (S^1\vee S^1)\to M$ on a surface. Here an ordered wedge is a wedge whose components are ordered.

We are interested in counting the minimal number of intersections points between a pair of curves and the minimal number of self-intersection points for a single curve. Intersection and self-intersection points are assumed to be transverse double points. 
In what follows, we focus on curves considered up to virtual homotopy. Since virtual homotopies can affect the genus of the ambient surface, it is helpful to consider minimal genus representatives of a virtual homotopy class. Such {\it irreducible curves\/} and their uniqueness are discussed in Section~\ref{sec:irreducible}.

Recently, Cahn and the first author~\cite{CahnChernov} showed that, for a pair of curves $(L_1, L_2)$ on a surface $M$, the minimal number of intersection points of two curves homotopic to $(L_1, L_2)$ equals the number of terms in the Anderson-Mattes-Reshetikhin (AMR) Poisson bracket $[L_1, L_2]$. In Section~\ref{sec:AMR}, we prove our main theorem, showing that a similar statement is true for a generalization of the AMR bracket to the case of pairs of curves considered up to virtual homotopy.

\begin{theorem}\label{mainthm} If the virtual homotopy classes $\{L_1,L_2\}_V$ and $\{L_2,L_1\}_V$ are not equal, then the minimal number of intersection points of two curves  in the virtual homotopy class $\{L_1, L_2\}_V$ equals to the number of terms 
$\terms [L_1,L_2]_V$ in the generalization of the AMR bracket.
\end{theorem}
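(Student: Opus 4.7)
The plan is to imitate the Cahn--Chernov strategy from \cite{CahnChernov} on a minimal genus representative of the flat virtual link class, thereby reducing the virtual statement to a classical one.

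First I would fix the definition of the generalized AMR bracket: for a transverse representative pair $(L_1,L_2)$ on a surface $M$,
$$[L_1,L_2]_V \;=\; \sum_{p\in L_1\cap L_2} \sign(p)\,\{L_1^p \vee L_2^p\}_V,$$
where the wedge is based at the intersection point $p$ and $\sign(p)$ compares the surface orientation with $(\dot L_1(p),\dot L_2(p))$. After verifying virtual-homotopy invariance (established elsewhere in the paper), $\terms[L_1,L_2]_V$ is the number of summands surviving maximal cancellation among equal virtual wedge classes with opposite signs. The upper bound is then immediate: every transverse representative yields exactly one formal term per intersection point, and cancellation cannot increase the count, so any representative with $n$ intersections satisfies $n \ge \terms[L_1,L_2]_V$.

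For the reverse inequality I would pass to an \emph{irreducible} representative on a minimal-genus surface $M_0$, using the essential uniqueness established in Section~\ref{sec:irreducible}. Equip $M_0$ with a hyperbolic metric (treating the torus and sphere components separately with flat or spherical metrics), and replace $L_1,L_2$ by their geodesic representatives. On $M_0$ these geodesics realize the minimum number of transverse intersections within the ordinary homotopy class, since no bigons can be extracted; crucially, the uniqueness statement for irreducible representatives forbids further reduction through destabilization, so no virtual homotopy can lower the intersection count below this classical minimum either.

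The crux, and the step I expect to be the main obstacle, is showing that no two distinct intersection points $p \neq q$ of the geodesic representatives yield wedges $\{L_1^p\vee L_2^p\}_V$ and $\{L_1^q\vee L_2^q\}_V$ that cancel in $[L_1,L_2]_V$. I would proceed in two stages. First, an equality of virtual wedge classes coming from two points of the same representative must, by the irreducibility uniqueness of Section~\ref{sec:irreducible}, be witnessed on $M_0$ itself, collapsing a virtual coincidence to a classical one. Second, on $M_0$ such a classical coincidence of wedges with opposite signs would, as in the Cahn--Chernov bigon-extraction argument, produce an embedded bigon between the geodesic arcs bounding $L_1^p,L_1^q$ or $L_2^p,L_2^q$, contradicting the no-bigon property of geodesics. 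The hypothesis $\{L_1,L_2\}_V \neq \{L_2,L_1\}_V$ enters precisely here: it prevents a rogue symmetry that would identify $\{L_1^p\vee L_2^p\}_V$ with $\{L_2^q\vee L_1^q\}_V$ and thus create cancellations not accounted for by geometric minimality. Once cancellation is ruled out, the number of terms equals the number of geodesic intersections, which equals the virtual minimum, completing the proof.
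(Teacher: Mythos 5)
Your proposal is correct and follows essentially the same route as the paper: pass to the irreducible representative, use the uniqueness of irreducible collections of wedges to show that a virtually cancelling pair of wedge terms must already cancel classically on the minimal surface (this is exactly the paper's Lemma~\ref{l:0.4}), and then invoke the classical Cahn--Chernov equality $\terms[L_1,L_2]=m$, with the hypothesis $\{L_1,L_2\}_V\neq\{L_2,L_1\}_V$ serving, as in the paper, to guarantee that $L_1$ and $L_2$ are non-homotopic so that this classical result applies. The only cosmetic difference is that you re-sketch the geodesic/bigon proof of the classical step where the paper simply cites~\cite{CahnChernov}.
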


Cahn~\cite{Cahncobracket} showed that the number of terms of $\mu(K)$, her modification of the Turaev cobracket of a curve $K$ on a surface $M$, determines the minimal number of self-intersection points of a curve homotopic to $K$. In~\cite{Cahnvirtual}, Cahn further generalized $\mu$ to the case of flat virtual knots and conjectured that the minimal number of self-intersection points of $\{K\}_V$ would be given by an analogous formula. In Section~\ref{sec:Cahn}, we define an alternate generalization $\mu_V$ of $\mu$ to flat virtual knots and show that a similar formula holds for the case of curves considered up to virtual homotopy.

\begin{theorem} \label{mainthm2}
Let $\{K\}_V$ be a flat virtual knot and $K$ an irreducible representative of $\{K\}_V$ that is homotopic to $(K')^n$ for some primitive curve $K'$ and some $n>0$. Then the minimal number of self-intersection points of $\{K\}_V$ equals $\frac{1}{2}\terms\mu_V(\{K\}_V)+ (n-1)$.
\end{theorem}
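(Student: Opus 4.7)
The strategy is to reduce the virtual problem to the ordinary homotopy problem on the minimal-genus surface, where Cahn's original theorem applies directly. Since $K$ is assumed to be an irreducible representative of $\{K\}_V$, it lives on a surface $M$ of minimal genus realizing the virtual class, and by the uniqueness results of Section~\ref{sec:irreducible} this pair $(M,K)$ is essentially canonical. The plan is thus to show that (a)~the minimum number of self-intersections over the virtual homotopy class $\{K\}_V$ is achieved already by some representative on $M$ in the ordinary homotopy class of $K$, and (b)~the number of terms of the virtual cobracket $\mu_V(\{K\}_V)$ agrees with the number of terms of the ordinary Cahn cobracket $\mu(K)$ of the irreducible representative. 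Combined, these two facts would let us quote Cahn's theorem from \cite{Cahncobracket} to conclude that the minimum is $\tfrac12\terms\mu(K)+(n-1)=\tfrac12\terms\mu_V(\{K\}_V)+(n-1)$.

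For step~(a), I would argue as follows. Given any representative $K''$ of $\{K\}_V$ on some surface $M''$ realizing the minimal number of self-intersection points, one may remove every simple closed curve on $M''$ disjoint from $K''$ that bounds compressibly by a sequence of elementary destabilizations; this does not change the number of self-intersection points. The resulting pair must coincide, by the irreducibility/uniqueness discussion of Section~\ref{sec:irreducible}, with $(M,K)$ up to orientation-preserving homeomorphism, and thus realizes the minimum already inside the ordinary homotopy class of $K$ on $M$. For step~(b), one unpacks the definition of $\mu_V$ from Section~\ref{sec:Cahn}: the terms of $\mu_V(\{K\}_V)$ are computed from self-intersection points of any smooth representative, and after identifying with terms of $\mu$ on the irreducible $(M,K)$ via destabilization the bijection is routine. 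Applying Cahn's formula \cite{Cahncobracket} to $K$ on $M$, written as $(K')^n$ for the primitive curve $K'$, then yields the claim, with the additive $(n-1)$ accounting for the unavoidable self-crossings produced by the $n$-fold traversal of a primitive loop.

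The main obstacle is step~(a): one must show that the stabilization/destabilization operations cannot be used to break self-intersection points below what is possible on the irreducible surface. The subtlety is that, in principle, a stabilization followed by a clever homotopy across the handle could appear to reduce crossings; ruling this out requires the rigidity of the irreducible representative proved in Section~\ref{sec:irreducible}, together with the observation that any simple curve on $M''$ in the complement of $K''$ along which one destabilizes is automatically compressible in an ambient sense. A secondary difficulty is verifying that $\mu_V$, defined intrinsically on virtual classes, matches $\mu$ in the expected way for a representative that is a nontrivial power $(K')^n$; here terms of $\mu$ that would cancel in the primitive setting may need to be tracked carefully to yield exactly the correction $(n-1)$, rather than some smaller correction absorbed into cobracket cancellations.
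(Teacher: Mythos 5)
Your argument reaches the correct conclusion but handles the one genuinely non-trivial point --- the lower bound --- by a different route than the paper. The paper's proof is short: $m_V\le m$ trivially (an ordinary homotopy is a virtual homotopy), $m=\tfrac12\terms\mu([K])+(n-1)$ by Cahn's theorem in~\cite{Cahncobracket}, $\terms\mu([K])=\terms\mu_V(\{K\}_V)$ by Lemma~\ref{l:4.1} since $K$ is irreducible, and then the matching lower bound $m_V\ge\tfrac12\terms\mu_V(\{K\}_V)+(n-1)$ is not argued but quoted from Proposition~5.1 of~\cite{Cahnvirtual}. You instead prove $m_V\ge m$ directly: destabilize a minimizing representative $K''$ down to an irreducible one, identify it with $(M,K)$ up to homotopy and orientation-preserving homeomorphism via Lemma~\ref{th:0.2}, and conclude that the virtual minimum is already attained in the ordinary homotopy class, after which Cahn's original formula finishes the proof. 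Your route is more self-contained (it avoids the external virtual-string estimate) and, as you note, it makes the $(n-1)$ correction come for free rather than having to be re-derived in the virtual setting; the paper's route is shorter but leans on~\cite{Cahnvirtual}. The one step you must make precise is that the passage from $K''$ to the irreducible representative can be achieved by destabilizations \emph{alone}: Lemma~\ref{th:0.2} as stated interleaves destabilizations with homotopies, and an intermediate homotopy could change the number of double points, which would break your count-preservation claim. What saves the argument is the definition of irreducibility: as long as the current representative is not irreducible it admits a nontrivial destabilization along a simple closed curve disjoint from the curve, such a destabilization leaves the curve (hence its self-intersection count) untouched, and the greedy process terminates since each nontrivial destabilization either drops the total genus or increases the number of essential surface components, both of which are bounded. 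With that observation spelled out, and with your step~(b) identified with Lemma~\ref{l:4.1}, the proof is complete.
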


\section{Irreducible Curves}
\label{sec:irreducible}

An elementary destabilization of a collection $\mathfrak L$ is \emph{trivial} if it chops off a sphere containing no components of $\mathfrak L$.
We say that $\mathfrak L$ is \emph{irreducible} if it admits only trivial destabilizations. Analogously, one defines the notion of an irreducible collection of (ordered) wedges of circles.


Motivated by a result of Kuperberg~\cite{GK}, Ilyutko, Manturov, and Nikonov~\cite{IMN} proved a uniqueness result for irreducible representatives of a flat virtual link. Their theorem is stated for {\it flat virtual knots} (i.e., one component links) but the proof works without change for multi-component links. Theorem~\ref{th:0.2} can also be established by a similar argument to that used by the first and third author~\cite{ChernovSadykov} to prove the uniqueness result for virtual Legendrian links.

\begin{lemma}[c.f. Theorem 1.2 in \cite{IMN}]\label{th:0.2} Every flat virtual link contains a unique (up to homotopy and an orientation preserving automorphism of $M$)  irreducible representative. The irreducible representative can be obtained from any representative by a composition of destabilizations and homotopies. 
\end{lemma}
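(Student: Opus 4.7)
I would split the argument into existence (with the stronger statement that only destabilizations and homotopies are needed) and uniqueness.

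For \textbf{existence}, I would start with any representative $\mathfrak L \subset M$ and apply non-trivial elementary destabilizations greedily. Writing $M' \subseteq M$ for the union of components of $M$ containing curves of $\mathfrak L$, a short case analysis shows that every non-trivial destabilization strictly increases $\chi(M')$ by at least $2$: a non-separating cut removes a handle, and a separating non-trivial cut either splits a carrying component into two curve-carrying components (of the same total genus) or removes a positive-genus piece disjoint from $\mathfrak L$. Since $\chi(M')$ is bounded above, the process terminates at an irreducible representative obtained from $\mathfrak L$ by destabilizations and homotopies alone.

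For \textbf{uniqueness}, let $(M_1, \mathfrak L_1)$ and $(M_2, \mathfrak L_2)$ be two irreducible representatives of the same flat virtual link, joined by a word $w$ of homotopies, stabilizations, and destabilizations. Moves supported in disjoint regions of the surface commute with each other, so I can normalize $w$ into the form
\[
(M_1, \mathfrak L_1) \;\xrightarrow{\text{stabilizations}}\; (N, \mathfrak L') \;\xrightarrow{\text{destabilizations, homotopy}}\; (M_2, \mathfrak L_2),
\]
with all stabilizations pushed to the start. In this form both $M_1$ and $M_2$ embed into the common surface $N$ with complementary handle systems, and $\mathfrak L_1$, $\mathfrak L_2$ descend to homotopic collections of curves in $N$.

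The heart of the proof, and the step I expect to be the main obstacle, is to match the stabilization handles of $N$ over $M_1$ with the destabilization handles producing $M_2$. Following Kuperberg and Ilyutko--Manturov--Nikonov, I would take the normal form with $N$ of minimal genus and argue by induction on the number of stabilization handles. An unmatched handle produces a simple closed curve in $N \setminus \mathfrak L'$ witnessing a non-trivial destabilization of either $\mathfrak L_1$ or $\mathfrak L_2$, contradicting irreducibility; this disposes of the case of disjoint handle systems. The delicate part is when a stabilization and a destabilization handle in $N$ meet, which requires a cut-and-paste argument on compressing discs in $N \setminus \mathfrak L'$ to replace an intersecting pair by disjoint ones, after which the induction proceeds. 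Matching the full handle systems then produces an orientation-preserving homeomorphism $M_1 \to M_2$ carrying $\mathfrak L_1$ to a collection homotopic to $\mathfrak L_2$; alternatively, one can adapt the direct cobordism argument used in Chernov--Sadykov for virtual Legendrian links to bypass this bookkeeping.
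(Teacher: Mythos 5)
The paper does not actually prove this lemma: it is imported wholesale from Ilyutko--Manturov--Nikonov (Theorem~1.2 of \cite{IMN}, itself in the spirit of Kuperberg \cite{GK}), with only the remark that the one-component proof goes through verbatim for links and that the cobordism argument of \cite{ChernovSadykov} gives an alternative. Your sketch follows exactly the strategy of those cited sources, so there is no divergence of approach to report; the question is only how much of the cited argument you have actually reconstructed. Your existence half is essentially complete, with one small leak: a non-trivial destabilization performed in a curve-free positive-genus component of $M$ does not change $\chi(M')$ at all, so your monovariant must also account for the topology of $M\setminus M'$ (e.g.\ use $\chi(M)$ corrected for spurious sphere components, or first dispose of curve-free components separately); also note that $\chi(M')$ is bounded above because each component of $M'$ carries at least one of the finitely many curves. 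For uniqueness, your normalization to ``all stabilizations first'' is fine (it is the standard square-completion showing any two representatives admit a common stabilization), and you correctly locate the heart of the matter in the handle-matching step; but the cut-and-paste on intersecting compressing discs in $N\setminus\mathfrak L'$, together with the verification that the induction actually closes, is precisely the content of the theorem you are trying to prove, and you state it as a plan rather than carry it out. Since the paper itself defers this to \cite{IMN}, your proposal is at the same level of completeness as the paper's treatment --- an accurate roadmap of the known proof rather than a self-contained one.
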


An argument similar to the one in the proof of Lemma~\ref{th:0.2} establishes Lemma~\ref{th:0.3}.

\begin{lemma}\label{th:0.3} Every virtual homotopy class of a collection of (ordered) wedges of circles on a surface contains a unique (up to homotopy and an orientation preserving automorphism of $M$) irreducible representative. The irreducible representative can be obtained from any representative by a composition of destabilizations and homotopies. 
\end{lemma}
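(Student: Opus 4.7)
The plan is to adapt the proof of Lemma~\ref{th:0.2} (the Ilyutko--Manturov--Nikonov argument for flat virtual links) verbatim to the present setting, using the fact that a wedge of circles is carried by its image, a one-dimensional subcomplex of $M$, and that stabilizations and destabilizations only see this image through its complement. Existence of an irreducible representative is immediate: starting from any representative $(M, \mathfrak W)$, each non-trivial destabilization along a simple closed curve $A \subset M \setminus \mathfrak W$ strictly reduces the total genus of the components of $M$ carrying wedges. Since this quantity is a non-negative integer, after finitely many destabilizations one reaches an irreducible representative, establishing at the same time the second assertion of the lemma.

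For uniqueness, fix two irreducible representatives $(M_0, \mathfrak W_0)$ and $(M_1, \mathfrak W_1)$ in the same virtual homotopy class and connect them by a finite sequence of stabilizations, destabilizations, and homotopies. The first step is the standard rearrangement: using the commutation of a stabilization with a subsequent destabilization whose supporting curves can be made disjoint (absorbing the required isotopies into homotopies of the wedges), one pushes all stabilizations to the beginning of the sequence. The output is a common ``source'' representative $(M_*, \mathfrak W_*)$ from which both $(M_0, \mathfrak W_0)$ and $(M_1, \mathfrak W_1)$ are obtained by sequences of destabilizations (interleaved with homotopies of the wedges).

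The heart of the argument is then a diamond lemma for destabilizations. Given two simple closed curves $A, B \subset M_* \setminus \mathfrak W_*$ supporting destabilizations in the two sequences, one puts them in minimal position in the complement of $\mathfrak W_*$; this is possible because $\mathfrak W_*$ has empty interior in $M_*$, so standard bigon elimination goes through unchanged. If $A$ and $B$ are disjoint, the two destabilizations commute. Otherwise an innermost surgery on a bigon cut off by $A \cap B$ produces a new simple closed curve $A' \subset M_* \setminus \mathfrak W_*$ that still supports a destabilization and has fewer intersections with $B$, enabling induction on $|A \cap B|$. Iterating this diamond completion forces $(M_0, \mathfrak W_0)$ and $(M_1, \mathfrak W_1)$ to admit a common descendant which, by the irreducibility of both sides, must coincide with each of them up to homotopy and an orientation-preserving automorphism of $M$. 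The main obstacle I anticipate is the bookkeeping in the surgery step, namely verifying that the surgered curve $A'$ remains eligible as a destabilization curve (in particular, that the resulting surgery does not produce a curve bounding a disk that contains wedge points). This is exactly the verification carried out in the IMN proof, and it transfers without change here because the manipulations of simple closed curves all take place in the complement of $\mathfrak W_*$, where the presence of the wedge points (as opposed to smooth circle components) is invisible.
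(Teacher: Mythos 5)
Your proposal is correct and follows essentially the same route as the paper, which itself proves Lemma~\ref{th:0.3} only by asserting that the Ilyutko--Manturov--Nikonov argument for Lemma~\ref{th:0.2} (existence by genus reduction, uniqueness by commuting stabilizations past destabilizations and a confluence argument on destabilization curves in minimal position) carries over verbatim because the wedges interact with the surgery curves only through their complement. Your write-up simply makes explicit the adaptation that the paper leaves implicit.
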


\begin{remark} An important consequence of Lemma~\ref{th:0.2} is that if two irreducible collections of curves $\mathfrak L_1$ and $\mathfrak L_2$ on surfaces $M_1$ and $M_2$ are virtually homotopic, then there is a homeomorphism $\varphi\co M_1\to M_2$ such that $\varphi\circ\mathfrak L_1$ is homotopic to $\mathfrak L_2$. A similar consequence holds true for a collection of (ordered) wedges of circles.  
\end{remark}

\section{The Andersen--Mattes--Reshetikhin Bracket}
\label{sec:AMR}

Let $\mathfrak L=\{L_1, L_2\}$ be a pair of curves on a surface $M.$ As usual, we assume that all the intersection points are transverse double points. Then, for any intersection point $p$, we define a wedge $\mathfrak L_p$ by taking the wedge sum of $L_1$ and $L_2$ at $p$. Define $\sign(p)$ to be $1$ if the orientation of the surface agrees with the orientation given by the tangent vectors of $L_1$ and $L_2$ at $p$, and $-1$ otherwise.

Let $W(M)$ denote the set of homotopy classes of wedges on a surface $M$, and $FW(M)$ the free abelian group generated by $W(M)$. The {\it Andersen--Mattes--Reshetikhin (AMR) bracket} is an element of $FW(M)$ given by the formal expression 
\[
    [L_1, L_2] = \sum_p \sign(p)[\mathfrak L_p],
\]
where $p$ ranges over the intersection points of $L_1$ and $L_2$.

We say that a pair $\sign(p)[\mathfrak{L}_p]$ and $\sign(q)[\mathfrak{L}_q]$ is \emph{cancelling}, if  $\mathfrak{L}_p$ is homotopic to $\mathfrak{L}_q$ and $\sign(p)=-\sign(q)$. Since $FW(M)$ is a free abelian group, the formal expression $[L_1, L_2]$ is an equivalence class where the equivalence is generated by introducing and removing cancelling pairs. A formal expression is said to be \emph{reduced} if it contains no cancelling pairs.

We generalize the AMR bracket to an operation $[L_1, L_2]_V$  in a similar way. To begin with, we trivially extend the notion of virtual homotopy of a finite collection of wedges to the notion of virtual homotopy of a finite collection of signed wedges.
Let $FW_V(M)$ denote the set of equivalence classes of formal expressions  $\sum (-1)^{\varepsilon(i)} [\mathfrak{L}_i]$ where $\mathfrak{L}_i\in W(M)$ and $\varepsilon(i)\in\{0,1\}$. A pair $(-1)^{\varepsilon(i)} [\mathfrak{L}_i]$ and $(-1)^{\varepsilon(j)} [\mathfrak{L}_j]$ is said to be \emph{virtually cancelling} if it is virtually homotopic to a cancelling pair.  The equivalence relation in $FW_V(M)$ is generated by introducing and removing cancelling pairs. In other words, $FW_V(M)$ is the quotient group of $FW(M)$ by the normal subgroup generated by the linear combinations  of the form 
\[
(-1)^{\varepsilon(i)} [\mathfrak{L}_i] - (-1)^{\varepsilon(j)} [\mathfrak{L}_j],
\] 
where the two terms $(-1)^{\varepsilon(i)} [\mathfrak{L}_i]$ and $(-1)^{\varepsilon(j)} [\mathfrak{L}_j]$  form a cancelling pair. 

A formal expression in $FW_V(M)$ is said to be \emph{reduced} if it contains no virtually cancelling pairs.

\begin{lemma}  Given a finite formal expression $\sum (-1)^{\varepsilon(i)} [\mathfrak{L}_i]$, suppose that there is a virtual homotopy of the finite collection $\{ (-1)^{\varepsilon(i)} [\mathfrak{L}_i]\}$ of signed wedges on a surface $M$ to  a collection $\{ (-1)^{\varepsilon(i)} [\mathfrak{L}'_i]\}$ such that $\sum (-1)^{\varepsilon(i)} [\mathfrak{L}'_i]=0$ in $FW(M')$ for some surface $M'$. Then  $\sum (-1)^{\varepsilon(i)} [\mathfrak{L}_i]=0$ in $FW_V(M)$.
\end{lemma}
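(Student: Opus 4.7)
The plan is to reduce the global vanishing statement to the defining pairwise relation of $FW_V(M)$, by pairing up indices according to how terms cancel on $M'$ and then pulling each pair back to a virtually cancelling pair on $M$. The key structural observation I would use is that a virtual homotopy of a collection of signed wedges restricts to a virtual homotopy of any subcollection: every move is either a homotopy of the wedges, a stabilization in the complement of all wedges, or a destabilization along a simple closed curve disjoint from all wedges, and each of these remains legitimate after forgetting some components.

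First I would exploit the hypothesis that $\sum (-1)^{\varepsilon(i)}[\mathfrak{L}'_i]=0$ in $FW(M')$. Because $FW(M')$ is the free abelian group on the set $W(M')$ of homotopy classes of wedges on $M'$, this vanishing is equivalent to the existence of a partition of the index set $\{1,\dots,N\}$ into pairs $\{i,j\}$ such that $\mathfrak{L}'_i$ is homotopic to $\mathfrak{L}'_j$ in $M'$ and $\varepsilon(i)\ne\varepsilon(j)$; in other words, each such pair is literally a cancelling pair on $M'$. I would fix such a pairing once and for all.

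Next, for each pair $\{i,j\}$ of the fixed pairing I would restrict the given virtual homotopy to the two components indexed by $i$ and $j$, obtaining a virtual homotopy from $\{(-1)^{\varepsilon(i)}\mathfrak{L}_i,(-1)^{\varepsilon(j)}\mathfrak{L}_j\}$ on $M$ to the cancelling pair $\{(-1)^{\varepsilon(i)}\mathfrak{L}'_i,(-1)^{\varepsilon(j)}\mathfrak{L}'_j\}$ on $M'$. By definition, the pair on $M$ is then virtually cancelling, so $(-1)^{\varepsilon(i)}[\mathfrak{L}_i]+(-1)^{\varepsilon(j)}[\mathfrak{L}_j]=0$ in $FW_V(M)$. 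Summing these pairwise vanishings over all pairs of the partition gives $\sum (-1)^{\varepsilon(i)}[\mathfrak{L}_i]=0$ in $FW_V(M)$, which is the conclusion.

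The main (and only) technical point to verify is the restriction assertion in the first paragraph, since once the virtual homotopy of the entire collection can be localized to any pair, the rest is a matter of bookkeeping with the pairing extracted from the free-abelian-group structure of $FW(M')$. I expect this verification to be routine: stabilizations can be performed ignoring the forgotten components, and a destabilization curve chosen disjoint from the full collection is in particular disjoint from any subcollection.
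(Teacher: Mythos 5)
Your proposal is correct and rests on the same key mechanism as the paper's proof: since $\sum (-1)^{\varepsilon(i)}[\mathfrak{L}'_i]=0$ in the free abelian group $FW(M')$, there is a cancelling pair on $M'$, and restricting the virtual homotopy to those two components pulls it back to a virtually cancelling pair on $M$. The only difference is organizational --- you partition the entire index set into cancelling pairs and sum the resulting relations directly, whereas the paper first passes to a reduced representative and then derives a contradiction --- so this is essentially the same argument.
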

\begin{proof}  Without loss of generality, we may assume that the  expression $\sum (-1)^{\varepsilon(i)} [\mathfrak{L}_i]$ is reduced as an element in $FW_V(M)$. Indeed, if it contained a cancelling pair, then the cancelling pair can be eliminated and thus reduce the number of terms in the formal expression. Since the number of terms in the formal expression is finite, reducing finitely many cancelling pairs results in a reduced formal expression representing the same element in $FW_V(M)$ as the initial formal expression. 

Suppose that $\sum (-1)^{\varepsilon(i)} [\mathfrak{L}_i]$ is reduced and non-trivial as an element in $FW_V(M)$. Since $\sum (-1)^{\varepsilon(i)} [\mathfrak{L}'_i]=0$ in $FW(M')$, it contains a cancelling pair, say $(-1)^{\varepsilon(s)} [\mathfrak{L}'_s]$ and $(-1)^{\varepsilon(t)} [\mathfrak{L}'_t]$. Then the formal expression $\sum (-1)^{\varepsilon(i)} [\mathfrak{L}_i]$ in $FW_V(M)$ contains a virtual cancelling pair $(-1)^{\varepsilon(s)} [\mathfrak{L}_s]$ and $(-1)^{\varepsilon(t)} [\mathfrak{L}_t]$, which contradicts the assumption that the formal expression $\sum (-1)^{\varepsilon(i)} [\mathfrak{L}_i]$ is reduced. Thus, $\sum (-1)^{\varepsilon(i)} [\mathfrak{L}_i]=0$ in $FW_V(M)$. 
\end{proof}

The virtual AMR bracket of a pair of oriented curves $L_1$ and $L_2$ on an oriented surface $M$ an element in $FW_V(M)$ represented by the formal expression
\[
    [L_1, L_2]_V = \sum_p \sign(p)[\mathfrak L_p].
\]

The proof of the following Lemma is straightforward. One checks that the value of the virtual AMR bracket under stabilization, destabilization, and the three Reidemeister moves is unchanged.

\begin{lemma} The virtual AMR bracket is well defined, i.e., its value does not depend on the choice of the representative of the virtual homotopy class of a pair of curves.
\end{lemma}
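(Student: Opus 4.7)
The plan is to verify invariance of the formal expression $[L_1,L_2]_V = \sum_p \sign(p)[\mathfrak{L}_p]$ under each elementary move that generates virtual homotopy. By a standard general position argument, any virtual homotopy between two pairs of curves in general position factors as a finite sequence of the following local moves: (a) ambient isotopy that does not change the intersection pattern; (b) a second Reidemeister move creating or destroying a pair of intersection points inside a small disc; (c) a third Reidemeister move; and (d) an elementary stabilization or destabilization carried out on a disc disjoint from $L_1\cup L_2$.

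Cases (a) and (d) are essentially formal. In (a) the intersection set, the signs, and each wedge homotopy class are all untouched. In (d), the intersection points and their signs are again unaffected, and each wedge $\mathfrak{L}_p$ on $M$ is identified with the corresponding wedge on the surgered surface $M'$ via the inclusion of the pre-surgery complement; since $FW_V$ is defined so that virtually homotopic wedges are identified, the two resulting formal expressions represent the same element, and the preceding lemma lets us conclude equality of the corresponding elements of $FW_V(M)$ and $FW_V(M')$.

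Case (b) is the heart of the argument. Two new intersection points $p,q$ appear inside a small disc $D$, and a direct inspection of the local model shows that the ordered tangent pairs $(TL_1,TL_2)$ at $p$ and at $q$ induce opposite orientations on $D$, so $\sign(p)=-\sign(q)$. Moreover, the wedges $\mathfrak{L}_p$ and $\mathfrak{L}_q$ differ only by sliding the wedge point across $D$ along the two arcs, and so are homotopic as maps $S^1\vee S^1\to M$. Hence $\sign(p)[\mathfrak{L}_p]+\sign(q)[\mathfrak{L}_q]$ is a cancelling pair and vanishes in $FW_V(M)$. For (c), a third Reidemeister move produces a canonical bijection between the three old and the three new intersection points under which signs are preserved and the corresponding wedges are connected by a homotopy confined to the triangle, hence are homotopic.

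I expect the main obstacle to be case (b): one must be careful that $\mathfrak{L}_p$ and $\mathfrak{L}_q$ are genuinely homotopic as based wedges (not merely freely homotopic), and that the orientation convention defining $\sign$ really forces the two signs to be opposite, regardless of which strand is $L_1$ and which is $L_2$ locally. Once this local oriented disc calculation is done, combining (a)--(d) yields that $[L_1,L_2]_V$ is an invariant of $\{L_1,L_2\}_V$.
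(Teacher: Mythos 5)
Your proposal is correct and follows exactly the route the paper indicates: the paper gives no written proof beyond the remark that one checks invariance under stabilization, destabilization, and the Reidemeister moves, which is precisely your case analysis (your cases (b)--(d)), and you correctly identify the second Reidemeister move as the only step requiring a genuine local computation of signs and wedge homotopy classes. The only move you do not name explicitly is the first Reidemeister move, but it only creates or destroys a self-intersection of a single component and so does not affect the sum over points of $L_1\cap L_2$.
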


\begin{remark}
Note that the generalized AMR operation is defined on a virtual homotopy class of a pair of curves rather than on a pair of virtual homotopy classes. Furthermore, the generalized AMR operation vanishes if $\{L_1,L_2\}_V = \{L_2,L_1\}_V$ (i.e., if the component curves can be exchanged via a virtual homotopy and orientation preserving homeomorphism of the underlying surface) due to skew-symmetry. For instance, if there is a representative $(L_1,L_2)$ on a surface $M$ for which the homotopy classes of $L_1$ and $L_2$ coincide, then $[L_1,L_2]_V = [L_2,L_1]_V=0$.
\end{remark}


Let $x$ be an element of the set $FW(M)$ or $FW_V(M)$. Then there is a reduced representative of $x$, which we denote by $\bar{x}$. Suppose
\[\bar{x} = \sum_{i=1}^n c_is_i,\]
where $c_i\in \Z$, $s_i\in W(M)$, and $s_i\neq s_j$ for $i\neq j$. Then we define the {\it number of terms} of $x$ to be
\[
	\terms(x) = \sum_{i=1}^n \abs{c_i}.
	\]

\begin{lemma}\label{l:0.4} Suppose that a representative $(L_1, L_2)$ of a virtual homotopy class of a pair of curves is irreducible. Then $\terms [L_1,L_2] = \terms [L_1,L_2]_V$.
\end{lemma}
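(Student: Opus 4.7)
The plan is to show that the reduced expression of $[L_1, L_2]$ in $FW(M)$ is already reduced in $FW_V(M)$; once this is established, both notions of $\terms$ automatically coincide. The inequality $\terms[L_1,L_2]_V \le \terms[L_1,L_2]$ is immediate, since every cancelling pair in $FW(M)$ is \emph{a fortiori} virtually cancelling. For the reverse, I would argue by contradiction: suppose the reduced form $\bar x = \sum_p \sign(p)[\mathfrak L_p]$ in $FW(M)$ still admits a virtually cancelling pair. Then there exist intersection points $p, q$ of $L_1$ and $L_2$ with $\sign(p) = -\sign(q)$ such that the ordered pair of wedges $(\mathfrak L_p, \mathfrak L_q)$ on $M$ is virtually homotopic, as an ordered collection of wedges, to a pair $(\mathfrak L'_p, \mathfrak L'_q)$ on a surface $M'$ satisfying $[\mathfrak L'_p] = [\mathfrak L'_q]$ in $W(M')$.

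The core of the argument rests on two observations. First, because every $\mathfrak L_p$ has image $L_1\cup L_2$ and because $L_1$ and $L_2$ meet at $p$ (so any separating simple closed curve in the complement of $L_1\cup L_2$ must have both $L_1$ and $L_2$ on the same side), the irreducibility of $(L_1, L_2)$ on $M$ transfers verbatim to the irreducibility of the ordered collection of wedges $(\mathfrak L_p, \mathfrak L_q)$ on $M$; in particular $(\mathfrak L_p, \mathfrak L_q)$ is itself an irreducible representative of its virtual homotopy class. Second, on the $M'$ side I would first use the homotopy $\mathfrak L'_q \simeq \mathfrak L'_p$ to replace $(\mathfrak L'_p, \mathfrak L'_q)$ by a diagonal pair $(\mathfrak L', \mathfrak L')$ whose two components coincide as maps, and then destabilize to an irreducible representative while performing every destabilization and every required homotopy simultaneously on both components. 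Since the two components share a common complement, each destabilization curve lies in this common complement and carries both copies through the surgery identically, so the resulting irreducible representative has the diagonal form $(\tilde{\mathfrak L}, \tilde{\mathfrak L})$ on some $\tilde M'$.

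Finally, I would apply the uniqueness clause of Lemma~\ref{th:0.3} to the two irreducible representatives $(\mathfrak L_p, \mathfrak L_q)$ on $M$ and $(\tilde{\mathfrak L}, \tilde{\mathfrak L})$ on $\tilde M'$ of the common virtual homotopy class to obtain an orientation-preserving homeomorphism $\varphi\co M \to \tilde M'$ with $\varphi\circ \mathfrak L_p \simeq \tilde{\mathfrak L} \simeq \varphi\circ \mathfrak L_q$ in $W(\tilde M')$. Composing with $\varphi^{-1}$ then yields $\mathfrak L_p \simeq \mathfrak L_q$ in $W(M)$, so $\sign(p)[\mathfrak L_p]$ and $\sign(q)[\mathfrak L_q]$ form a genuine cancelling pair in $FW(M)$, contradicting reducedness of $\bar x$. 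The main delicate point I anticipate is keeping the diagonal form $(\mathfrak L',\mathfrak L')$ intact throughout the reduction to $(\tilde{\mathfrak L}, \tilde{\mathfrak L})$, which relies on the observation that destabilization and parallel homotopy preserve equality of components, together with the equivalence of the two irreducibility conditions established above.
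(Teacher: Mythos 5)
Your proof is correct and follows essentially the same route as the paper: both arguments hinge on showing that the wedges $\mathfrak L_p$ inherit irreducibility from the irreducible pair $(L_1,L_2)$ and then invoking the uniqueness of irreducible representatives of collections of wedges (Lemma~\ref{th:0.3}) to conclude that a virtually cancelling pair must already be cancelling on $M$. Your explicit reduction to a diagonal representative $(\tilde{\mathfrak L},\tilde{\mathfrak L})$ merely spells out a step the paper compresses into the assertion that any cancellation in $[L_1,L_2]_V$ necessitates cancellation of the corresponding terms in $[L_1,L_2]$.
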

\begin{proof} 
Let $p$ be a point in the intersection $L_1\cap L_2$, and let $\mathfrak L_p$ be the wedge of $L_1$ and $L_2$ at $p$. Since every homotopy of the wedge $\mathfrak L_p$ defines a homotopy of the irreducible pair of curves $(L_1, L_2)$, we deduce that $\mathfrak L_p$ is irreducible. Consequently, the collection of wedges $\mathfrak L=\sqcup\mathfrak L_q$ on $M$, where $q$ ranges over all intersection points in $L_1\cap L_2$, is irreducible.  

By the uniqueness of the irreducible representative of a collection of wedges (see Theorem~\ref{th:0.3}),  any other irreducible representative of $\mathfrak L$ is obtained from $\mathfrak L$ by the composition of a homotopy of the collection of wedges and an orientation preserving homeomorphism of $M$ (i.e., no stabilizations or destabilizations are necessary). Under an orientation preserving homeomorphism of $M$, even though the element $[L_1, L_2]$ may change, the value $\terms [L_1, L_2]$ does not change. Consequently, any cancellation of terms in $[L_1,L_2]_V$ necessitates cancellation of the corresponding terms in $[L_1,L_2]$. Thus we conclude that $\terms [L_1, L_2]\leq \terms [L_1, L_2]_V$. Since the other inequality is obvious, we conclude that the terms are equal.
\end{proof}

\begin{remark}
When the representative $(L_1, L_2)$ of a virtual flat link is not irreducible, the value $\terms [L_1, L_2]$ may differ from the value $\terms [L_1, L_2]_V$. Indeed, there exist virtually cancelling pairs that are not cancelling. 
For example, consider the two curves on the genus three surface shown in Figure~\ref{AMRBracketExample.fig}. They intersect in two points and one can show that $\terms [L_1, L_2]=2.$
However, if one destabilizes the surface by deleting the central handle, then the two intersection points can be killed by a homotopy. Hence $\terms [L_1, L_2]_V=0$.

\begin{figure}
\includegraphics[height=1in]{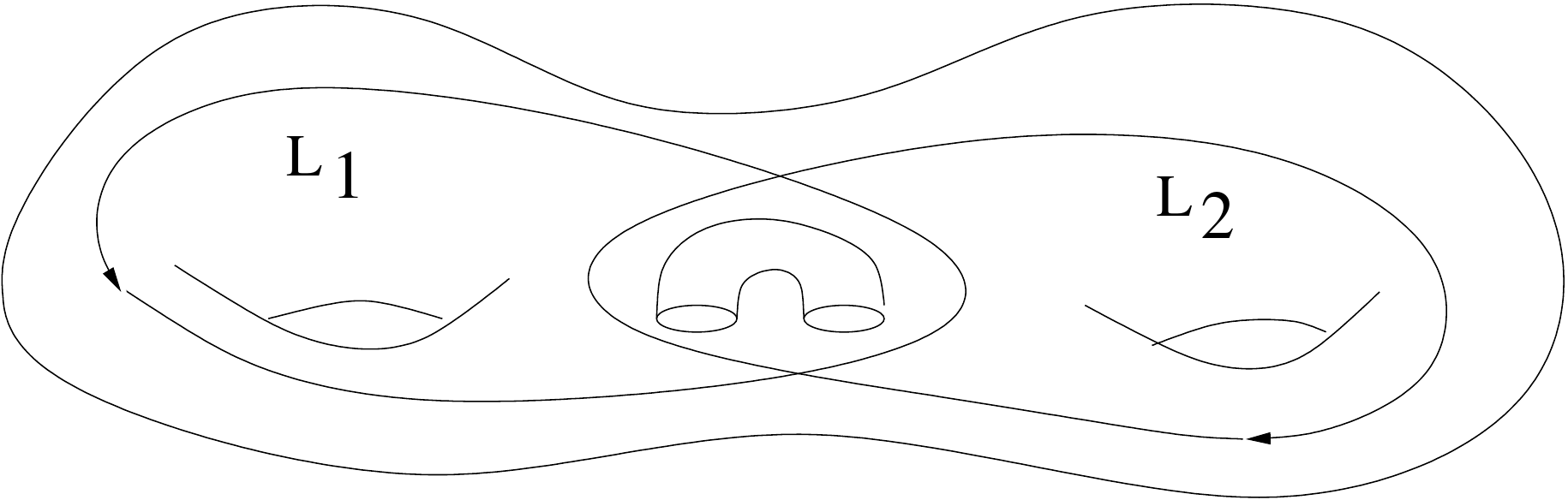}
\caption{}
\label{AMRBracketExample.fig}
\end{figure}
\end{remark}

We are now in position to prove Theorem~\ref{mainthm}.


\begin{proof} By Theorem~\ref{th:0.2}, we may assume that the representative $(L_1,L_2)$ is irreducible. Let $m$ denote the minimal number of intersection points between $L_1$ and $L_2$ in the homotopy class of $(L_1,L_2)$. By Lemma~\ref{l:0.4}, $\terms [L_1, L_2]=\terms [L_1, L_2]_V$. On the other hand, since $\{L_1,L_2\}_V\neq \{L_2,L_1\}_V$, we know that the homotopy classes of $L_1$ and $L_2$ are different and so $\terms[L_1,L_2]=m$ by~\cite{CahnChernov}.

Let $m_V$ be the minimal number of intersection points between $L_1$ and $L_2$ in the virtual homotopy class of $(L_1,L_2)$. We have $m_V\le m= \terms[L_1, L_2]_V$. On the other hand, trivially, we have $m_V\ge \terms[L_1, L_2]_V$. Thus $m_V=\terms[L_1, L_2]_V$. 
\end{proof}

\begin{remark}
Theorem~\ref{mainthm} generalizes the result of Cahn and the first author~\cite{CahnChernov} saying that, given two non-homotopic curves $(L_1, L_2)$ on a surface $M$, $\terms[L_1,L_2]$ equals the minimal number of intersection points of any pair of curves homotopic to the pair $(L_1, L_2)$.
\end{remark}

\begin{remark}\label{htpyclass}
In the proof of Theorem~\ref{mainthm}, it was sufficient to assume that the homotopy classes $[L_1]$ and $[L_2]$ are distinct for some irreducible curve $(L_1,L_2)$. 
\end{remark}

\begin{remark}\label{minimalintersectionpoints}
Assume that two curves on a surface are not homotopic to powers of a third curve. For a surface of genus greater than zero, the number of intersection points of two such curves is minimal when the curves are closed geodesics with respect to a metric of constant non-positive sectional curvature (see, for example,~\cite{TuraevViro}). In the case of two curves which are homotopic to powers of a third curve, the two closed geodesics will be powers of the same closed geodesic and one has to parallel shift one of the curves slightly to realize the minimal number of intersection points. Combining this with Theorem~\ref{mainthm}, we get that the minimal number of intersection points of a pair of curves in a given virtual homotopy class is obtained when the underlying surface is irreducible and the two curves are closed geodesics.
\end{remark}

\section{The Cahn Cobracket}
\label{sec:Cahn}

A curve $K'$ on a surface $M$ is {\it primitive} if it is not homotopic to a (nontrivial) power of another curve on $M$. Cahn~\cite{Cahncobracket} generalized the Turaev cobracket to a cobracket $\mu$ and proved that the minimal number of self-intersection points of a curve $K$ equals $\frac{1}{2}\terms\mu([K])+(n-1)$, where $K$ is homotopic to $(K')^n$ for some primitive curve $K'$ and some $n>0$.


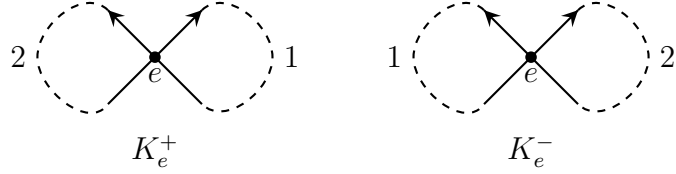
\begin{figure}
\centering
\begin{tikzpicture}[thick,scale=1.25]
\draw[draw=none, use as bounding box](0,-.5) rectangle (5,1.25);

		\draw[decoration={markings,mark=at position 1 with {\arrow[ultra thick]{stealth}}},
				postaction={decorate}
				] (0,0) -- (1,1);
		\draw[decoration={markings,mark=at position 1 with {\arrow[ultra thick]{stealth}}},
				postaction={decorate}
				] (1,0) -- (0,1);
		\draw[fill] (0.5,0.5) circle[radius=.5mm] node[below] {$e$};
				
		\draw[dashed] (0,1) to[out=135,in=90] (-.75,.5) node[left] {$2$} to[out=270,in=235] (0,0);
		\draw[dashed] (1,1) to[out=45,in=90] (1.75,.5) node[right] {$1$} to[out=270,in=-45] (1,0);

		\node at (.5,-.5) {$K_e^+$};

		\begin{scope}[xshift=4cm]
		\draw[decoration={markings,mark=at position 1 with {\arrow[ultra thick]{stealth}}},
				postaction={decorate}
				] (0,0) -- (1,1);
		\draw[decoration={markings,mark=at position 1 with {\arrow[ultra thick]{stealth}}},
				postaction={decorate}
				] (1,0) -- (0,1);
		\draw[fill] (0.5,0.5) circle[radius=.5mm] node[below] {$e$};
				
		\draw[dashed] (0,1) to[out=135,in=90] (-.75,.5) node[left] {$1$} to[out=270,in=235] (0,0);
		\draw[dashed] (1,1) to[out=45,in=90] (1.75,.5) node[right] {$2$} to[out=270,in=-45] (1,0);

		\node at (.5,-.5) {$K_e^-$};
		\end{scope}
\end{tikzpicture}
\caption{Positive labeling (left) and negative labeling (right).}
\label{fig:orderedwedges}
\end{figure}

Let $K$ be an oriented curve on a surface $M$. For a self-intersection $e$ of $K$ and $\E = \pm$, let $K_e^\E$ be the ordered wedge of circles with labeling $\E$ as depicted in Figure~\ref{fig:orderedwedges}. We denote the 
homotopy class of $K_e^\E$ by $[K_e^\E]$.

Given a self-intersection  $e$, there are two non-oriented smoothing of the curve $K$ at $e$. Of the two non-oriented smoothings of $K$, precisely one smoothed curve, say $K'$, admits an orientation compatible with the orientation of $K$. We say that $K'$ is the smoothing of $K$ according to the orientation, or, simply, the smoothing of $K$. We note that the smoothing $K'$ of $K$ has two components. 

 A self-intersection $e$ of $K$ is {\it semi-trivial} if, after smoothing $K$ at $e$, one of the resulting components intersects neither itself nor the other component. More generally, we say that an ordered wedge $K_e$ of circles with base point $e$ is semi-trivial if one of the circles intersects neither itself nor the other circle. We note that if $K$ is semi-trivial at $e$, then there is a virtual homotopy of $(K, e)$ to an oriented curve $(K', e')$ on a surface $M'$ such that after smoothing of $K'$ at $e'$ one of the components of $K'$ at $e'$ bounds a disc $D$ whose interior is disjoint from $K'$. Furthermore, there is a homotopy of $K'$ eliminating the self-intersection point $e'$. 

Let $W'(M)$ be the set of homotopy classes of ordered wedges realized on $M$, and $FW'(M)$ the free abelian group generated by $W'(M)$.  Cancelling pairs and virtual cancelling pairs for ordered wedges are defined similarly to those of unordered pairs. Let $FW'_V(M)$ denote the factor group of $FW'(M)$ by the normal subgroup generated by the terms $[K_e]$, where $K_e$ is semi-trivial, as well as by the formal expressions $(-1)^{\varepsilon(i)}[K_i]-(-1)^{\varepsilon(j)}[K_j]$ where $(-1)^{\varepsilon(i)}[K_i]$ and $(-1)^{\varepsilon(j)}[K_j]$ is a virtual cancelling pair.

For a curve $K$ on a surface $M$, the Cahn cobracket is defined as
\[\mu([K]) = \sum_e \bigl( [K_e^+] - [K_e^-]\bigr) ,\]
where $e$ ranges over all the self-intersections of $K$ for which neither of the two loops of the wedge $K_e^\varepsilon$ is null homotopic. Thus $\mu([K])$ is an element in $FW'(M)$. The formal expression for $\mu([K])$ is said to be reduced if it has no cancelling pairs. The number of terms in the reduced formal expression for $\mu([K])$ is denoted by $\terms\mu([K])$. 
Cahn showed that if $K$ is homotopic to the $n$-th power of a primitive curve, then the minimal number of self-intersection points of $[K]$ equals $\frac{1}{2}\terms\mu([K])+ (n-1)$.

We define $\mu_V(\{K\}_V)$ to be the class in $FW'_V(M)$ represented by the formal expression $\mu([K])$. As above, $\terms\mu_V(\{K\}_V)$ stands for the number of terms in the reduced formal expression for $\mu_V(\{K\}_V)$.




\begin{lemma}\label{l:4.1} Suppose $K$ is an irreducible representative of the virtual homotopy class $\{K\}_V$. Then $\terms\mu([K]) = \terms \mu_V(\{K\}_V)$.
\end{lemma}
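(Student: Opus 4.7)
The plan is to follow the argument of Lemma~\ref{l:0.4} while additionally handling the fact that $FW'_V(M)$ is obtained from $FW'(M)$ by imposing two families of relations: virtual cancelling pairs and semi-trivial wedges. The inequality $\terms \mu_V(\{K\}_V) \le \terms \mu([K])$ is automatic because $\mu_V(\{K\}_V)$ is the image of $\mu([K])$ under the quotient map $FW'(M) \to FW'_V(M)$. I therefore concentrate on the reverse inequality.

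The first step is to verify that if $K$ is irreducible, then each wedge $K_e^\E$ contributing to $\mu([K])$ is an irreducible wedge on $M$, and moreover the disjoint collection $\bigsqcup_e (K_e^+ \sqcup K_e^-)$ of all such wedges is an irreducible collection of ordered wedges. The reason is that each such wedge has the same image in $M$ as $K$, so any simple closed curve $A \subset M$ disjoint from the wedge (or collection) is automatically disjoint from $K$, and the resulting destabilization is trivial for the wedge if and only if it is trivial for $K$.

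The second step, which is the main new content compared to Lemma~\ref{l:0.4}, is to show that no term appearing in $\mu([K])$ is semi-trivial when $K$ is irreducible. Suppose to the contrary that $K_e^\E$ is semi-trivial, so that one component $c$ of the smoothing of $K$ at $e$ is an embedded simple closed curve in $M$ disjoint from the other component. A suitably chosen parallel pushoff produces a simple closed curve $A \subset M \setminus K$ isotopic to $c$. Irreducibility of $K$ then forces $A$ to bound a disc disjoint from $K$, since a non-separating $A$, or a separating $A$ with no disc side, would yield a non-trivial destabilization of $K$. Hence $c$ is null-homotopic, but $e$ was assumed to contribute to $\mu([K])$, which requires both loops of $K_e^\E$ to be non-null-homotopic, a contradiction.

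The final step combines Step 2 with the uniqueness of irreducible representatives. Any cancellation in $\mu_V(\{K\}_V)$ not already present in $\mu([K])$ must arise either from a virtually cancelling pair that is not itself cancelling, or from a semi-trivial term. The latter is excluded by Step 2. For the former, Lemma~\ref{th:0.3} applied to the irreducible collection of Step 1 says that any virtual homotopy between two of the wedges $K_e^\E$ reduces to a homotopy combined with an orientation-preserving automorphism of $M$. Since such an automorphism preserves the number of terms in $FW'(M)$, the supposed virtual cancellation is already witnessed in $\mu([K])$, giving $\terms \mu([K]) \le \terms \mu_V(\{K\}_V)$ and hence equality. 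I expect the semi-trivial step to be the main obstacle, particularly the geometric construction of the pushoff $A$ disjoint from $K$ near $e$ and the case analysis forcing $c$ to be null-homotopic.
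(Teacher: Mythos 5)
Your argument is correct and is essentially the paper's: the paper proves this lemma in one line by observing that each $K_e^{\E}$ is irreducible (since it has the same image as $K$) and then invoking the proof of Lemma~\ref{l:0.4}, i.e.\ the uniqueness of irreducible representatives from Lemma~\ref{th:0.3}, exactly as in your Steps 1 and 3. Your Step 2, which rules out semi-trivial terms by pushing off a smoothed component to a simple closed curve $A$ in the complement of $K$ and using irreducibility to force $A$ to bound a disc, is a correct and worthwhile addition that the paper's one-sentence proof leaves implicit, since the quotient defining $FW'_V(M)$ kills semi-trivial wedges (and anything virtually homotopic to them) in addition to virtually cancelling pairs.
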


The proof of Lemma~\ref{l:4.1} is identical to the proof of Lemma~\ref{l:0.4}, noting that $K_e^{\E}$ is irreducible for $\E=\pm$ since $K$ is irreducible. 

Similar to the proof of Theorem~\ref{mainthm}, we establish Theorem~\ref{mainthm2}.

\begin{proof} By Theorem~\ref{th:0.2}, we may assume that the representative $K$ is irreducible. Let $m$ denote the minimal number of self-intersections of $K$ in its homotopy class. Applying Lemma~\ref{l:4.1}, $\terms\mu([K])=\terms\mu_V(\{K\}_V)$. Thus
\[m = \frac{1}{2}\terms\mu([K]) + (n-1) = \frac{1}{2}\terms\mu_V(\{K\}_V) + (n-1).\]
Let $m_V$ be the minimal number of self-intersections of $K$ in $\{K\}_V$. We have
\[m_V \leq m = \frac{1}{2}\terms\mu_V(\{K\}_V) + (n-1).\]
By Proposition 5.1 in~\cite{Cahnvirtual}, $m_V \geq \frac{1}{2}\terms\mu_V(\{K\}_V) + (n-1)$. Hence we have $m_V = \frac{1}{2}\terms\mu_V(\{K\}_V) + (n-1)$.
\end{proof}


\begin{remark}\label{minimalselfintersectionpoints}
Similar to Remark~\ref{minimalintersectionpoints}, the minimal number of self-intersection points for flat virtual knots $\{K\}_V$ with primitive irreducible representatives is obtained when the representative $K$ is irreducible and a closed geodesic.
\end{remark}

\end{document}